\documentclass{amsart}

\newtheorem{thmnum}{}

\ifx\MyBeamer\undefined
\usepackage[breaklinks,colorlinks,pagebackref]{hyperref}
\usepackage[a4paper,margin=1in]{geometry}
\else
\if\MyBeamer t
\usepackage[breaklinks,colorlinks,backref]{hyperref}
\usepackage[a4paper,margin=1in]{geometry}
\usepackage{beamerarticle}
\fi\fi

\usepackage{amsmath}
\usepackage{amssymb}
\usepackage{mathrsfs}
\usepackage{amsthm}
\usepackage{aliascnt}
\usepackage[utf8]{inputenc}
\usepackage[T1]{fontenc}
\ifx\FrenchText\undefined
\usepackage[british]{babel}
\else
\usepackage[british,french]{babel}
\AtBeginDocument{%
\catcode`\:=12%
\catcode`\!=12%
}
\fi

\makeatletter

\sloppy

\newcounter{quotethmcnt}

\ifx\MyBeamer\undefined

\def\equationautorefname~#1\null{(#1)}
\def\itemautorefname~#1\null{#1}
\fi

\ifx\MyBeamer\undefined

\ifx\thmnum\undefined

\fi

\newcommand{\mynewthm}[3][]{%
  \newaliascnt{#2}{thmnum}%
  \newtheorem{#2}[#2]{#3}%
  \aliascntresetthe{#2}%
  \newtheorem*{#2*}{#3}%
  \expandafter\newcommand\csname #2autorefname\endcsname{#3}%
  \expandafter\renewcommand\csname the#2\endcsname{\thethmnum}%
}

\newtheorem*{clm}{Claim}
\newenvironment{clmprf}{%
  \begin{proof}[Proof of claim]%
  }{\end{proof}}

\else

\let\xxx=\frametitle
\def\frametitle#1{%
  \xxx{%
    \setbeamercolor*{math text}{use={titlelike,my math text},fg=titlelike.fg!80!my math text.fg}%
    #1}%
  \setbeamercolor{math text}{use=my math text,fg=my math text.fg}%
}

\newcommand{\beamerenv}[3]{%
\newenvironment<>{#1}%
{%
  \setbeamercolor{temp}{fg=structure.fg}%
  \setbeamercolor{structure}{fg=#2}%
  \setbeamercolor{block body}{use=structure,bg=structure.fg!5!white}%
  \begin{#3}%
}%
{\end{#3}\setbeamercolor{structure}{fg=temp.fg}}}

\newcommand{\mynewthm}[3][green!50!black]{%
  \newtheorem*{#2x}{#3}%
  \beamerenv{#2}{#1}{#2x}%
}

\beamerenv{other}{violet}{block}

\fi

\ifx\FrenchText\undefined
\newcommand{\myiffrench}[2]{#2}
\else
\newcommand{\myiffrench}[2]{\iflanguage{french}{#1}{#2}}
\fi

\theoremstyle{plain}
\mynewthm[red]{thm}{\myiffrench{Théorème}{Theorem}}
\mynewthm{prp}{Proposition}
\mynewthm[purple]{lem}{\myiffrench{Lemme}{Lemma}}
\mynewthm[orange]{fct}{\myiffrench{Fait}{Fact}}
\mynewthm[purple!50!white]{cor}{\myiffrench{Corollaire}{Corollary}}

\theoremstyle{definition}
\mynewthm[green!80!black]{dfn}{\myiffrench{Définition}{Definition}}
\mynewthm{hyp}{\myiffrench{Hypothèse}{Hypothesis}}
\mynewthm{conv}{Convention}
\mynewthm{conj}{Conjecture}
\mynewthm{ntn}{Notation}
\mynewthm{cst}{Construction}

\theoremstyle{remark}
\mynewthm[yellow!90!black]{rmk}{\myiffrench{Remarque}{Remark}}
\mynewthm{qst}{Question}
\mynewthm{exc}{\myiffrench{Exercice}{Exercise}}
\mynewthm{exm}{\myiffrench{Exemple}{Example}}

\ifx\MyBeamer\undefined

\newcommand{\myenumlabel}[1]{\textnormal{(\roman{#1})}}

\fi

\newcounter{cycprfcnt}
\newcounter{cycprffirst}
\newcommand{\cycprfpreamble}[1]%
{%
  \setcounter{cycprfcnt}{1}
  \setcounter{cycprffirst}{#1}
  \setlength{\itemindent}{0.5\leftmargin}%
  \setlength{\leftmargin}{0pt}%
  \newcommand{\cpcurr}{\myenumlabel{cycprfcnt}}%
  \newcommand{\cpnext}{\addtocounter{cycprfcnt}{1}\cpcurr}%
  \newcommand{\impnext}{\cpcurr{} $\Longrightarrow$ \cpnext.}%
  \def\makelabel##1{\ifnum\value{cycprffirst}=0\hspace{-0.7\itemindent}\setcounter{cycprffirst}{1}\fi##1}%
}%

{\begin{list}{\impnext}{\cycprfpreamble{#1}}}%
{\qedhere\end{list}}%

\newenvironment{cycprf*}[1][0]%
{\begin{list}{\impnext}{\cycprfpreamble{#1}}}%
{\end{list}}%

\def\indsym#1#2{%
  \setbox0=\hbox{$\m@th#1x$}%
  \kern\wd0%
  \hbox to 0pt{\hss$\m@th#1\mid$\hbox to 0pt{$\m@th#1^{#2}$\hss}\hss}%
  \lower.9\ht0\hbox to 0pt{\hss$\m@th#1\smile$\hss}%
  \kern\wd0}

\def\nindsym#1#2{%
  \setbox0=\hbox{$\m@th#1x$}%
  \kern\wd0%
  \hbox to 0pt{\hss$\m@th#1\not$\kern1.4\wd0\hss}
  \hbox to 0pt{\hss$\m@th#1\mid$\hbox to 0pt{$\m@th#1^{#2}$\hss}\hss}%
  \lower.9\ht0\hbox to 0pt{\hss$\m@th#1\smile$\hss}%
  \kern\wd0}

\def\dotminussym#1#2{%
  \setbox0=\hbox{$\m@th#1-$}%
  \kern.5\wd0%
  \hbox to 0pt{\hss\hbox{$\m@th#1-$}\hss}%
  \raise.6\ht0\hbox to 0pt{\hss$\m@th#1.$\hss}%
  \kern.5\wd0}

\renewcommand{\setminus}{\smallsetminus}

\DeclareMathOperator{\Hom}{Hom}

\DeclareMathOperator{\Frac}{Frac}

\DeclareMathOperator{\rk}{rk}

\DeclareMathOperator{\st}{st}

\newcommand{\cM}{\mathcal{M}}
\newcommand{\cN}{\mathcal{N}}

\newcommand{\cU}{\mathcal{U}}

\newcommand{\bC}{\mathbf{C}}

\newcommand{\bR}{\mathbf{R}}

\makeatother

\begin{document}

\title{Tensor products of valued fields}

\author{Itaï \textsc{Ben Yaacov}}

\address{Itaï \textsc{Ben Yaacov} \\
  Université Claude Bernard -- Lyon 1 \\
  Institut Camille Jordan, CNRS UMR 5208 \\
  43 boulevard du 11 novembre 1918 \\
  69622 Villeurbanne Cedex \\
  France}

\urladdr{\url{http://math.univ-lyon1.fr/~begnac/}}

\thanks{Research supported by the Institut Universitaire de France and ERC Grant no.\ 291111.}

\keywords{valued field ; tensor product ; quantifier elimination ; ACVF}
\subjclass[2010]{12J10 ; 15A69 ; 03C10}

\begin{abstract}
  We give a short argument why the tensor product semi-norm on $K \otimes_k L$ is multiplicative when $k$ is an algebraically closed valued field and $K$ and $L$ are valued extensions (valued in $\bR$).
  When the valuation on $k$ is non trivial we use the fact that $ACVF$, the theory of algebraically closed (non trivially) valued fields, has quantifier elimination.
\end{abstract}

\maketitle

It is a classical fact (e.g., Zariski and Samuel \cite[Chapter~IV, Theorem~40, Corollary~1]{Zariski-Samuel:CommutativeAlgebra}) that any two extensions $K$ and $L$ of an algebraically closed field $k$ the ring $K \otimes_k L$ is an integral domain (and this characterises algebraically closed fields).
When $K$ and $L$ (and therefore $k$) are valued in $(\bR^{\geq 0},\cdot)$, the tensor product carries a natural semi-norm; by analogy with the non valued case, if $k$ is also algebraically closed, we would expect this norm to be ``prime'', i.e., multiplicative, extending to a valuation of the fraction field.
This is indeed proved by Jérôme \textsc{Poineau} \cite[Corollaire~3.14]{Poineau:Angelique}, with both the result and the proof stated in the language of Berkovich spaces, making them fairly obscure to those not familiar with this formalism (such as the author, who is indebted to Amaury \textsc{Thuillier} for having pointed to and explained Poineau's result).
Here we propose a more direct proof, using quantifier elimination for the theory of algebraically closed valued fields.

\begin{dfn}
  \label{dfn:ValuedField}
  A \emph{valued field} is a pair $k = (k,O^k) = (k,O)$ where $k$ is a field and the \emph{valuation ring} $O \subseteq k$ is a sub-ring such that $k = O \cup \bigl( O \setminus \{0\} \bigr)^{-1}$.
  We let $O^\times$ denote the group of units of $O$, and call $(\Gamma^k,\cdot) = (\Gamma,\cdot) = k^\times/O^\times$ the \emph{value group}.
  We let $|{\cdot}| \colon k^\times \rightarrow \Gamma$ denote the quotient map, and add a formal symbol $0 = |0|$.
  We order $\Gamma \cup \{0\}$ by $|a| \leq |b| \Longleftrightarrow a \in bO$.
  By a \emph{standard valued field} we mean a valued field together with an embedding $(\Gamma,\cdot,<) \rightarrow (\bR^{>0},\cdot,<)$.

  An \emph{embedding} of valued fields must respect the valuation ring (in both directions), and therefore induces an embedding of the value groups.
  An embedding of standard valued fields is also required to respect the embeddings of the value groups in the reals.

  A \emph{semi-normed $k$-vector space} (or $k$-module) is a $k$-vector space $U$ together with a \emph{semi-norm} $\|{\cdot}\|\colon U \rightarrow \Gamma^U \cup \{0\}$, where $\Gamma^U$ is an ordered multiplicative group extending $\Gamma^k$, satisfying $\|x+y\| \leq \max \|x\|, \|y\|$ and $\|ax\| \leq |a| \|x\|$ (whence $\|ax\| = |a| \|x\|$) for $x,y \in U$, $a \in k$.
  It is \emph{standard} if $k$ is standard and $U$ is equipped with an embedding $\Gamma^U \rightarrow (\bR^+,\cdot,<)$ extending that of $\Gamma^k$.
  In particular, a (standard) valued field extending $k$ is a (standard) normed $k$-vector space.
\end{dfn}

We refer the reader to any standard textbook on model theory, e.g., Poizat \cite{Poizat:Cours} for a general discussion of structures, quantifier elimination, elementary extensions and ultra-powers.

\begin{fct}[{\cite[Theorem~2.1.1(i)]{Haskell-Hrushovski-Macpherson:EliminationOfImaginariesInValuedFields}}]
  \label{fct:QE}
  Consider a valued field as a logical structure in the language of fields (i.e., of rings), together with a predicate symbol for the binary relation $|x| \leq |y|$.
  Then the theory of algebraically closed non trivially valued fields (commonly denoted $ACVF$) in this language has quantifier elimination.
  In particular, if $K/k$ is an extension of such fields then $K \succeq k$ is an elementary extension.
\end{fct}

\begin{fct}
  \label{fct:UltraPowerEmbedding}
  If $\cM$ is any structure, in the sense of first order logic (e.g., a valued field, or a pair of a valued field and a sub-field) and $\cU$ is an ultra-filter then the ultra-power $\cM^\cU$ is an elementary extension of $\cM$.
  Conversely, every elementary extension $\cN \succeq \cM$ embeds over $\cM$ in some ultra-power of $\cM$.
\end{fct}

When $K/k$ is a field extension, let $\langle \ldots \rangle_k$ denote the span in $K$ viewed as a $k$-vector space.

\begin{lem}
  \label{lem:UltraPower}
  Let $K/k$ be an extension of valued fields.
  \begin{enumerate}
  \item
    \label{item:UltraPowerPair}
    For any ultra-filter $\cU$, the embeddings $k \subseteq k^\cU \subseteq K^\cU$ and $k \subseteq K \subseteq K^\cU$ yield a commutative diagram.
    Given $X \subseteq K$, $y \in K$, and $\gamma \in \Gamma^K$ such that $|y| \leq \gamma |y'|$ for all $y' \in y + \langle X \rangle_k$, we also have $|y| \leq \gamma |y'|$ for all $y' \in y + \langle X \rangle_{k^\cU}$.
  \item
    \label{item:UltraPowerExtensionEmbedding}
    If $k$ is algebraically closed and non trivially valued, then there exists an ultra-filter $\cU$ and an embedding $\iota\colon K \rightarrow k^\cU$ which is the identity on $k$.
  \item
    \label{item:UltraPowerStandardPart}
    If $K/k$ in the previous item is moreover an extension of standard valued fields then the embedding $\Gamma^k \subseteq \bR^{>0}$ induces $\Gamma^{(k^\cU)} = (\Gamma^k)^\cU \subseteq (\bR^\cU)^{>0}$, and $|a| = \st |\iota a|$ for all $a \in K^\times$, where $\st$ denotes the standard part map (so in particular, $|\iota a| \in \bR^\cU$ lies in the convex hull of $\bR^+$).
  \end{enumerate}
\end{lem}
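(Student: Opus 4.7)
The proof splits naturally into three pieces, each combining a standard ultra-power fact with one specific input.

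\emph{Part (1).} The commutation of the two inclusion diagrams is immediate from functoriality of ultra-powers applied to $k \subseteq K$: both routes send $c \in k$ to the class of its constant sequence in $K^U$. For the norm inequality, I would write a general element of $y + \langle X \rangle_{k^U}$ as $y' = y + \sum_{i=1}^n c_i x_i$ with $c_i \in k^U$ and $x_i \in X$, represent each $c_i = [(c_i^\alpha)_\alpha]_U$ with $c_i^\alpha \in k$, and observe that $y'$ is the class in $K^U$ of the sequence $\bigl(y + \sum_i c_i^\alpha x_i\bigr)_\alpha$. Each coordinate lies in $y + \langle X \rangle_k$, so the hypothesis gives $|y| \leq r|y + \sum_i c_i^\alpha x_i|$ in $K$ for every $\alpha$, and Łoś's theorem lifts this to $K^U$.

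\emph{Part (2).} The plan is to invoke quantifier elimination for ACVF. Extend the valuation on $K$ to an algebraic closure $\bar K$; since $k$ is non trivially valued, so is $\bar K$, hence both $k$ and $\bar K$ are models of ACVF. Because QE implies that every embedding between models of the theory is elementary, $k \prec \bar K$. Using the fact cited just above the lemma, $\bar K$ embeds over $k$ into some ultra-power $k^U$; restricting to $K \subseteq \bar K$ produces the desired $\iota$.

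\emph{Part (3).} The key observation is that $\iota$ respects valuation rings in both directions, so for $a \in K^\times$ and $b \in k^\times$ one has $|a| \leq |b|$ in $\Gamma^K$ iff $a/b \in O^K$ iff $\iota(a)/b \in O^{k^U}$ iff $|\iota a| \leq |b|$ in $\Gamma^{k^U}$. Under the standard embeddings into $\bR^{>0}$, the left-hand comparison is just the real comparison. Since $k$ is algebraically closed, $\Gamma^k$ is divisible, and being non trivial it is therefore dense in $\bR^{>0}$. For any $\epsilon > 0$ I can pick $\lambda, \mu \in \Gamma^k$ with $|a| - \epsilon < \lambda < |a| < \mu < |a| + \epsilon$; by the equivalence above, $\lambda \leq |\iota a| \leq \mu$ in $(\bR^U)^{>0}$. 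This simultaneously places $|\iota a|$ in the convex hull of $\bR^+$ and shows $\st|\iota a| = |a|$.

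The main conceptual step is the QE invocation in (2): one must verify both that the valuation extends to $\bar K$ with $\bar K \models ACVF$ and that QE forces $k \prec \bar K$. Once that is in place, (3) is a routine density-and-sandwich argument using that $\iota$ is an embedding of valued fields, and (1) is a direct Łoś computation.
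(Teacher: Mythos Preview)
Your argument is correct and matches the paper's own proof in all three parts: the paper phrases (1) as ``$(K,k)^U$ is an elementary extension of the pair $(K,k)$'' where you unwind this via an explicit \L o\'s computation, and for (2) and (3) the paper does exactly what you do (pass to the algebraic closure, invoke model completeness of $ACVF$ to get $k \prec \bar K$, then use density of $\Gamma^k$ in $\bR^{>0}$ for the sandwich), only more tersely.
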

\begin{proof}
  For the first assertion, we use the fact that the pair $(K,k)^\cU = (K^\cU,k^\cU)$ is an elementary extension of $(K,k)$.
  For the second assertion, we may assume that $K$ is algebraically closed.
  By \autoref{fct:QE} we have $K \succeq k$, and we may conclude using \autoref{fct:UltraPowerEmbedding}.
  For the last assertion, for $a \in k^\times$ we have $\st |\iota a| = |\iota a| = |a|$.
  Choose $a \in k^\times$ such that in addition $|a| > 1$, and let $b \in K^\times$.
  Let $m,n$ be integers such that $\frac{m}{n} \leq \log_{|a|} |b| \leq \frac{m+1}{n}$ and $n > 0$.
  Then $|a^m| \leq |b^n| \leq |a^{m+1}|$, so $|a|^{m/n} \leq |\iota b| \leq |a|^{(m+1)/n}$ and therefore $|a|^{m/n} \leq \st |\iota b| \leq |a|^{(m+1)/n}$ as well.
  Our assertion follows.
\end{proof}

\begin{ntn}
  Tuples, e.g., $\bar a = (a_0, \ldots, a_{m-1})$, are always indexed starting at zero.
  We shall consider tuples as column vectors, so $\bar a^t \cdot \bar b = \sum_i a_ib_i$, and by analogy, $\bar x^t \otimes \bar y = \sum x_i \otimes y_i$ for tensors.
\end{ntn}

\begin{dfn}[{\cite[2.1.7]{Bosch-Guentzer-Remmert:NonArchimedeanAnalysis}}]
  \label{dfn:TensorProductNorm}
  Let $k$ be a standard valued field, $U$ and $V$ two standard semi-normed vector spaces over $k$.
  For $z \in U \otimes_k V$ we define
  \begin{gather}
    \label{eq:TensorProductNorm}
    \|z\| = \inf_{z = \bar x^t \otimes \bar y} \ \max_i \|x_i\| \|y_i\|.
  \end{gather}
\end{dfn}

There is a canonical embedding $U \otimes_k V \subseteq \Hom_k(U^*,V)$, and let $\rk z$ denote the (finite) rank of $z$ as a morphism $U^* \rightarrow V$.
A presentation $z = \bar x^t \otimes \bar y$ has length $\rk z$ if and only if it has minimal length if and only if each of the tuples $\bar x$ and $\bar y$ is linearly independent over $k$.

\begin{lem}
  \label{lem:TensorProductNorm}
  With the hypotheses of \autoref{dfn:TensorProductNorm}:
  \begin{enumerate}
  \item The function $z \mapsto \|z\|$ defined in \autoref{eq:TensorProductNorm} is a semi-norm on $U \otimes_k V$.
    If $U$ and $V$ are $k$-algebras (commutative, or at least such that $k$ is central) with sub-multiplicative semi-norms (namely, $\|xy\| \leq \|x\| \|y\|$) then the tensor product semi-norm is sub-multiplicative as well.
  \item We may restrict \autoref{eq:TensorProductNorm} to presentations of $z$ of length $\rk z$ without changing the result.
  \item We have $\|x \otimes y\| = \|x\| \|y\|$.
  \end{enumerate}
\end{lem}
\begin{proof}
  The first item is immediate (see \cite{Bosch-Guentzer-Remmert:NonArchimedeanAnalysis}).
  For the second, consider a presentation $z = \bar x^t \otimes \bar y$ of length $m+1$, with $\bar x$ linearly dependent over $k$.
  Up to a permutation we may assume that $x_m = \sum_{i<m} a_i x_i$ where $a_i \in k$ and $\|x_m\| = \max_{i<m} \|a_i x_i\|$.
  Then $z = \sum_{i<m} x_i \otimes y_i'$, where $y_i' = y_i + a_i y_m$, and
  \begin{gather*}
    \max_{i\leq m} \|x_i\|\|y_i\|
    = \max_{i<m} \max\bigl( \|x_i\|\|y_i\|, \|a_i x_i\|\|y_m\| \bigr)
    \geq  \max_{i<m} \|x_i\|\|y_i'\|.
  \end{gather*}
  The second item follows.
  The third item follows from the second.
\end{proof}

(If $U$ and $V$ are standard normed and $k$ complete then $U \otimes_k V$ is normed, but for our purposes this is beside the point.)

\begin{lem}
  \label{lem:ValueEstimate}
  Let $k$ be a valued field, $U$ a semi-normed $k$-vector space, $\Gamma = \Gamma^U$, and let $\bar x \in U^m$ and $\bar \gamma \in \Gamma^m$ be such that:
  \begin{gather}
    \label{eq:ValueEstimateCondition}
    \tag{$*_{\bar \gamma,k}$}
    \|x_i\| \leq \gamma_i \|x\| \quad \text{for all } i < m \text{ and } x \in x_i + \langle x_{<i} \rangle_k.
  \end{gather}
  \begin{enumerate}
  \item
    \label{item:ValueEstimateField}
    For every $\bar a \in k^m$:
    \begin{gather*}
      \|\bar a^t \cdot \bar x\| \prod \gamma_i \geq \max_i |a_i| \|x_i\|.
    \end{gather*}
  \item
    \label{item:ValueEstimateTensorProduct}
    Assume in addition that $k$ is standard and $U$ and $V$ are standard semi-normed $k$-vector spaces.
    Then for every $\bar y \in V^m$:
    \begin{gather*}
      \|\bar x^t \otimes \bar y\| \prod \gamma_i \geq \max_i \|x_i\| \|y_i\|.
    \end{gather*}
  \end{enumerate}
\end{lem}
\begin{proof}
  Dropping those $x_i$ such that $\|x_i\| = 0$ and the corresponding $a_i$ or $y_i$ will not change either the hypotheses or the conclusions.
  We may therefore assume that $\|x_i\| > 0$ for all $i$, in which case the hypothesis \autoref{eq:ValueEstimateCondition} implies that $\bar x$ is linearly independent over $k$ and $\gamma_i \geq 1$ for all $i$.
  For $0 \leq j \leq m$ let $\beta_j = \prod_{i < j} \gamma_i$.

  For the first assertion, let $\alpha = \max_i |a_i| \|x_i\|$.
  Then $\alpha = \|a_i x_i\| \leq \beta_i \|a_i x_i\|$ for some $i$, and we may choose $\ell < m$ maximal such that $\alpha \leq \beta_\ell \|a_\ell x_\ell\|$.
  By \autoref{eq:ValueEstimateCondition} we have $\beta_{\ell+1} \|\sum_{i \leq \ell} a_i x_i\| \geq \beta_\ell \| a_\ell x_\ell \| \geq \alpha$.
  By choice of $\ell$ we have $\beta_{\ell+1} \|a_i x_i\| \leq \beta_i \|a_i x_i\| < \alpha$ for $i > \ell$, so $\beta_m \|\bar a^t \cdot \bar x\| \geq \beta_{\ell+1} \|\bar a^t \cdot \bar x\| \geq \alpha$.

  For the second assertion we need to show that if $\bar x^t \otimes \bar y = z = \bar x'^t \otimes \bar y'$ then $\beta_m \max \|x'_i\| \|y'_i\| \geq \max_i \|x_i\| \|y_i\|$.
  By \autoref{lem:TensorProductNorm} we may assume that $\bar y'$ is linearly independent over $k$.
  Then $\bar x' \subseteq \langle \bar x \rangle_k$ (otherwise there is a linear functional $\lambda \in U^*$ which vanishes on $\bar x$ but not on $\bar x'$, and $\lambda \cdot z \in V$ is both zero and non zero), so let us write $x_i' = \sum_{j<m} a_{ij} x_j$.
  By linear independence of $\bar x$, a tensor calculation yields, $y_j = \sum_{i<n} a_{ij} y'_i$.
  Thus by \autoref{item:ValueEstimateField}
  \begin{gather*}
    \beta_m \max_i \|x_i'\| \|y_i'\| \geq \max_{i,j} |a_{ij}| \|x_j\| \|y_i'\| \geq \max_j \|x_j\| \|y_j\|,
  \end{gather*}
  as desired.
\end{proof}

We can now prove our main result.
The argument goes roughly as follows.
We consider an extension of algebraically closed standard valued fields $K / k$.
As we saw earlier, an ultra-power $K^\cU$ contains two extensions of $k$, namely $K$ and $k^\cU$, and by \autoref{lem:ValueEstimate} the vector space they generate in $K^\cU$ is canonically isometric to $K \otimes_k k^\cU$.
In particular, since we may also embed $L/k$ in $k^\cU/k$ (this is where quantifier elimination is used) we obtain an isometric embedding of the algebra $K \otimes_k L$ in $K^\cU$, and we know that the latter carries a multiplicative valuation.
When $k$ carries a trivial valuation but neither $K$ nor $L$ do, the last step fails, and a completely different argument is required.
While the latter case is uninteresting to us, it is included for the sake of completeness.

\begin{thm}
  \label{thm:MainTheorem}
  Let $k$ be a standard algebraically closed valued field, let $K,L \supseteq k$ be standard valued field extensions, and let $A = K \otimes_k L$.
  Then the tensor semi-norm $\|{\cdot}\|$ on $A$ is multiplicative, extending to a standard valuation on $F = \Frac(A / \ker \|{\cdot}\|)$.
  This renders $F$ an extension of both $K$ and $L$.
\end{thm}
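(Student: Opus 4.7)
The plan is to realise $\|\cdot\|$ as the standard part of a valuation obtained by pulling back, via an ultra-power embedding, the valuation on $K^U$. After disposing of the (classical) trivially-valued case, assume the valuation on $k$ is non-trivial. By Lemma~\ref{lem:UltraPower}(\ref{item:UltraPowerExtensionEmbedding}) fix an ultra-filter $U$ and a $k$-embedding $\iota\colon L \hookrightarrow k^U$; since $K$ and $k^U$ are commuting subrings of $K^U$ both containing $k$, the universal property of the tensor product yields a $k$-algebra homomorphism $\Phi\colon A \to K^U$ sending $x \otimes y$ to $x \cdot \iota(y)$.

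A first point is that $\Phi$ is injective: any $k$-basis of $K$ remains $k^U$-linearly independent inside $K^U$ (a $k^U$-relation would descend, on a set in $U$, to a genuine $k$-relation), so the multiplication map $K \otimes_k k^U \to K^U$ is injective, and composing with $1_K \otimes \iota$ gives $\Phi$ injective. Define $v(z) := \st|\Phi(z)|_{K^U}$ (standard part via Lemma~\ref{lem:UltraPower}(\ref{item:UltraPowerStandardPart})); then $v$ inherits multiplicativity and the ultrametric inequality from $|\cdot|_{K^U}$. Applying the ultrametric inequality to any representation $z = \sum x_i \otimes y_i$ and taking standard parts yields $v(z) \leq \max_i |x_i||y_i|$, so $v \leq \|\cdot\|$ after passing to the infimum.

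For the reverse inequality and non-vanishing, use Lemma~\ref{lem:ValueEstimate}. Take a tensor-rank representation $z = \bar x^t \otimes \bar y$ with both $\bar x$ and $\bar y$ linearly independent over $k$. For $\epsilon > 0$, by replacing each $x_i$ in turn by $x_i + w$ for $w \in \langle x_{<i}\rangle_k$ bringing $|x_i + w|$ within a factor $1+\epsilon$ of $d_i := \inf_{w' \in \langle x_{<i}\rangle_k}|x_i + w'|$ (and compensating in $\bar y$ by the inverse change of basis so as not to alter $z$), we obtain a representation in which $\bar x$ satisfies the hypothesis of Lemma~\ref{lem:ValueEstimate} with $r_i = 1+\epsilon$. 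Lemma~\ref{lem:UltraPower}(\ref{item:UltraPowerPair}) transfers this orthogonality to $\bar x$ inside $K^U$ over $k^U$, and Lemma~\ref{lem:ValueEstimate}(\ref{item:ValueEstimateField}) applied there with coefficients $\iota(y_i) \in k^U$ yields $|\Phi(z)|(1+\epsilon)^m \geq \max_i |x_i||\iota(y_i)|$. Taking standard parts and combining with the trivial bound $\|z\| \leq \max_i |x_i||y_i|$ gives $v(z)(1+\epsilon)^m \geq \|z\|$; letting $\epsilon \to 0$ gives $v(z) \geq \|z\|$, and positivity is immediate since $\max_i |x_i||y_i| > 0$ (as $\bar x \neq 0$).

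Hence $\|\cdot\| = v$ is a valuation on $A$; extending by $|z/w| := \|z\|/\|w\|$ gives a valuation on $\Frac(A)$ (well-defined by multiplicativity) restricting to the given valuations on $K$ and $L$. The step I expect to be most delicate is the orthogonalisation, which requires the $d_i$ to be positive so that a $(1+\epsilon)$-orthogonal basis of $\langle \bar x\rangle_k$ exists; this is where the algebraic closedness of $k$ and the freedom in simultaneous basis changes on both sides of a tensor-rank representation must be combined carefully to rule out pathologies coming from non-closedness of finite-dimensional $k$-subspaces in $K$.
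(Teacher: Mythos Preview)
Your argument in the non-trivially valued case is essentially the paper's: embed $L$ into $k^U$, push $A$ into $K^U$ via the induced map, and identify $\|z\|$ with $\st|\Phi(z)|$ by combining the easy inequality $\st|\Phi(z)|\le\|z\|$ with the estimate of Lemma~\ref{lem:ValueEstimate}(\ref{item:ValueEstimateField}) applied in $K^U$ (after transferring the near-orthogonality of $\bar x$ via Lemma~\ref{lem:UltraPower}(\ref{item:UltraPowerPair})) and letting the constant tend to $1$. Your explicit injectivity argument for $\Phi$ is correct but not actually needed: multiplicativity follows from $\|\cdot\|=\st|\Phi(\cdot)|$ alone, and the paper obtains non-vanishing separately from Lemma~\ref{lem:ValueEstimate}(\ref{item:ValueEstimateTensorProduct}). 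On the orthogonalisation you single out as delicate, the paper gives you no more than you give yourself: it simply asserts that for any rational $r>1$ one can write $z=\bar x^t\otimes\bar y$ with $|x_i|\le r|x|$ for all $x\in x_i+\langle x_{<i}\rangle_k$, without further comment. Your instinct that algebraic closedness of $k$ is what rescues this step is misplaced, though --- algebraic closedness enters only through the ACVF embedding and through $A$ being a domain, not through any closedness property of finite-dimensional $k$-subspaces of $K$.

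The genuine gap in your proposal is the trivially valued case, which you dismiss as classical. The paper does not: when $k$ is trivially valued but both $K$ and $L$ are non-trivially valued, the ACVF embedding is unavailable (the paper notes that if $k$ together with at least one of $K,L$ is trivially valued one can use quantifier elimination in $ACF$ instead, but not otherwise), and a separate argument is supplied. The paper calls $z$ \emph{$(\alpha,\beta)$-pure} if it can be written as $\bar x^t\otimes\bar y$ with all $|x_i|=\alpha$ and all $|y_i|=\beta=\|z\|/\alpha$; products of pure elements are handled by rescaling to the $(1,1)$-pure case and invoking that the tensor product of the residue fields is a domain. General $z$ are then decomposed (using that on a finite-dimensional subspace of $K$ only finitely many values occur when $k$ is trivially valued) as a pure leading part plus terms strictly smaller in a lexicographic ordering on $(|x_i||y_i|,|x_i|)$, from which multiplicativity follows. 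This part of the argument is genuinely different from the ultrapower method and is not something you can wave away.
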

\begin{proof}
  Assume first that the valuation on $k$ is not trivial, and let $\iota\colon L \hookrightarrow k^\cU$ be an embedding as per \autoref{lem:UltraPower}\ref{item:UltraPowerExtensionEmbedding}.
  Since $K$ is a sub-field of $K^\cU$, the universal property of tensor products gives rise to a natural map $\iota\colon A \rightarrow K^\cU$ such that $\|z\| \geq \st |\iota z|$ for all $z \in A$.
  For the converse inequality, let $1 < \gamma \in \Gamma^k$ be rational.
  We may always express $z$ as $\bar x^t \otimes \bar y$, say of length $m$, where $\bar x$ satisfies $(*_{(\gamma,\gamma,\ldots),k})$, and by \autoref{lem:UltraPower}\ref{item:UltraPowerPair}, also $(*_{(\gamma,\gamma,\ldots),k^\cU})$.
  Thus, by \autoref{lem:ValueEstimate}\autoref{item:ValueEstimateField} applied to $K^\cU$ as a normed $k^\cU$-vector space,
  \begin{gather}
    \label{eq:MainTheoremIneqiality}
    \gamma^m \st |\iota z| \geq \st \max_i |x_i \iota y_i| = \max_i |x_i| |y_i| \geq \|z\|.
  \end{gather}
  Since $k$ is algebraically closed, we can choose $1 < \gamma \in \Gamma^k \subseteq \bR$ arbitrarily close to one.
  Then \autoref{eq:MainTheoremIneqiality} gives $\st | \iota z | \geq \|z\|$ and therefore $\st | \iota z | = \|z\|$.
  Thus, for every $z,z' \in A$ we have
  \begin{gather*}
    \|zz'\| = \st | \iota(zz')| = \st | \iota z | \st |\iota z'| = \|z\| \|z'\|,
  \end{gather*}
  as desired.

  When $k$ is trivially valued we need a different argument.
  Call $z \in A$ \emph{$(\alpha,\beta)$-pure} if it can be written as $\bar x^t \otimes \bar y$ with $|x_i| = \alpha$ and $|y_i| = \beta = \|z\|/\alpha$ for all $i$.
  When $z,z' \in A$ are pure, we can multiply them by elements of $K$ and $L$ to reduce to the case where both are $(1,1)$-pure, in which case $\|zz'\| = \|z\| \|z'\|$ holds since the tensor product of the residue fields is an integral domain.

  Say that a presentation $z = \bar x^t \otimes \bar y$ is \emph{normalised} if $x_i$ has least value in $x_i + \langle x_{<i} \rangle$ for each $i$: by \autoref{lem:ValueEstimate}\ref{item:ValueEstimateTensorProduct} we then have $\| \bar x^t \otimes \bar y \| \cdot 1 \geq \max_i |x_i| |y_i|$, i.e., $\|z\| = \max_i |x_i| |y_i|$.
  Since the valuation on $k$ is trivial, if $V \subset K$ is a finite-dimensional $k$-vector space, the valuation takes at most $\dim V + 1$ possible values on $V$.
  It follows that for any presentation $z = \bar x^t \otimes \bar y$ there exists a normalised presentation $z = \bar u^t \otimes \bar v$ where $u_i \in x_i + \langle x_{<i} \rangle$.
  In other words, $\bar u = C^t \bar x$ where $C$ is unipotent upper triangular and $\bar v = C^{-1} \bar y$.
  Say then that $C$ \emph{normalises} $z = \bar x^t \otimes \bar y$.

  Normalising and reordering, any $z \in A$ can be split as $z_0 + z_1$ where $z_0$ is $(\alpha,\beta)$-pure for some $\alpha,\beta$, and $z_1 \in A_{\alpha,\beta} = \Bigl \langle x \otimes y : \bigl( |x| |y|, |y| \bigr) < (\alpha\beta,\beta) \text{ in lexicographic order} \Bigr\rangle$.
  Conversely, we claim that if $z = z_0 + z_1$ where $z_0$ is $(\alpha,\beta)$-pure and $z_1 \in A_{\alpha,\beta}$, as witnessed by $z_0 = \bar x^t \otimes \bar y$ and $z_1 = \bar x'^t \otimes \bar y'$, then $\|z\| = \alpha \beta$.
  We may assume that $|x'_j| |y'_j| = \alpha \beta$, and therefore $|y'_j| < \beta$, for all $j$.
  Let $\begin{pmatrix} C & E \\ 0 & D\end{pmatrix}$ normalise $z = z_0 + z_1 = \bar x^t \otimes \bar y + \bar x'^t \otimes \bar y'$, where the blocks correspond to the two parts, so $C$ normalises $z_0 = \bar x^t \otimes \bar y$.
  Thus $z_0 = \bar u^t \otimes \bar v$ and $z = \bar u^t \otimes \bar w + \bar u'^t \otimes \bar v'$, where $\bar u = C^t \bar x$, $\bar v = C^{-1} \bar y$ and
  \begin{gather*}
    \begin{pmatrix} \bar u \\ \bar u' \end{pmatrix}
    = \begin{pmatrix} C^t & 0 \\ E^t & D^t \end{pmatrix} \begin{pmatrix} \bar x \\ \bar x' \end{pmatrix},
    \qquad
    \begin{pmatrix} \bar w \\ \bar v' \end{pmatrix}
    = \begin{pmatrix} C^{-1} & F \\ 0 & D^{-1} \end{pmatrix} \begin{pmatrix} \bar y \\ \bar y' \end{pmatrix}
    = \begin{pmatrix} C & E \\ 0 & D \end{pmatrix}^{-1} \begin{pmatrix} \bar y \\ \bar y' \end{pmatrix},
  \end{gather*}
  with $F = -C^{-1} E D^{-1}$.
  In particular, $\bar w = \bar v + F \bar y'$.
  Since $\|z_0\| = \alpha \beta$ there is at least one $i$ such that $|u_i| = \alpha$ and $|v_i| = \beta$.
  Since $|w_i - v_i| < \beta$ by our assumption that $|y'_j| < \beta$ for all $j$, we obtain $|w_i| = \beta$ as well and so $\|z\| = \alpha \beta$.

  Now consider $z,z' \in A$, and decompose them $z = z_0 + z_1$ and $z' = z'_0 + z'_0$, where $z_0$ is $(\alpha,\beta)$-pure, $z'_0$ is $(\gamma,\delta)$-pure and $z_1 \in A_{\alpha,\beta}$, $z'_1 \in A_{\gamma,\delta}$.
  By the case of product of two pure elements, $z_0 z'_0$ is $(\alpha\gamma,\beta\delta)$-pure, and clearly $z_1 z'_0 + z_0 z'_1 + z_1 z'_1 \in A_{\alpha\gamma,\beta\delta}$, so $\|zz'\| = \alpha\beta\gamma\delta = \|z\|\|z'\|$, as desired.
\end{proof}

When $k$ is trivially valued and so is one of $K$ or $L$, then a variant of the first argument does go though.
Indeed, if $L$ is trivially valued then an embedding $\iota\colon L \rightarrow k^\cU$ (of pure fields, or of trivially valued fields) exists by quantifier elimination for $ACF$ (the theory of algebraically closed fields), and the rest of the argument remains the same.

\begin{rmk}
  Our definitions only allow for non Archimedean valued fields.
  More generally, an \emph{absolute value} on a field $k$ is a map $|{\cdot}|\colon k \rightarrow \bR^{\geq 0}$, satisfying $|ab| = |a| |b|$, $|a+b| \leq |a|+|b|$, $|0| = 0$ and $|1| = 1$.
  It is a standard fact (e.g., Artin \cite{Artin:AlgebraicNumbersAndFunctions}) that an absolute value is either \emph{Archimedean}, i.e., $|a| = |\iota^k a|^\alpha$, where $\alpha = \log_2 |2| \in (0,1]$ and $\iota^k \colon k \rightarrow \bC$ is uniquely determined up to complex conjugation, or is a standard valuation as defined here.
  In particular, if $K/k$ is an extension of valued fields in this sense, then one is Archimedean if and only if the other is, in which case we may choose $\iota^K$ so that $\iota^k \subseteq \iota^K$, and if $k$ is algebraically closed (or merely such that the image of $\iota^k$ is not contained in $\bR$), this determines $\iota^K$.
  When $K$ and $L$ are two extensions of an algebraically closed Archimedean valued field $k$, we can define on $A = K \otimes_k L$:
  \begin{gather*}
    \|\bar x^t \otimes \bar y\| = | \iota^K \bar x^t \cdot \iota^L \bar y |.
  \end{gather*}
  This is clearly multiplicative, inducing an Archimedean absolute value on $\Frac(A/\ker \|{\cdot}\|)$.
\end{rmk}

\providecommand{\bysame}{\leavevmode\hbox to3em{\hrulefill}\thinspace}

\end{document}